\begin{document}

\title[Perturbation of extreme singular values after appending a column]
{On the perturbation of the extremal singular values of a matrix after appending a column}
\author{St\'ephane Chr\'etien} 
\author{S\'ebastien Darses}

\address{Laboratoire de Math\'ematiques, UMR 6623\\ 
Universit\'e de Franche-Comt\'e, 16 route de Gray,\\
25030 Besancon, France} 
\email{stephane.chretien@univ-fcomte.fr}

\address{LATP, UMR 6632\\
Universit\'e Aix-Marseille, Technop\^ole Ch\^{a}teau-Gombert\\
39 rue Joliot Curie\\ 13453 Marseille Cedex 13, France \\
and \\
Laboratoire de Math\'ematiques, UMR 6623\\ 
Universit\'e de Franche-Comt\'e, 16 route de Gray,\\
25030 Besancon, France}
\email{sebastien.darses@univ-amu.fr}

\maketitle

\vspace{.5cm}

\begin{abstract}
We first review various bounds on the extreme singular values of a matrix in the particular case where it is obtained after appending a column vector to a given 
matrix. Most of the results are contained in \cite{Li-Li}. We provide simple proofs based on the study of the characteristic polynomials rather than variational methods. Second, we present three applications 
to random matrix theory, signal processing and control theory.
\end{abstract}

%\tableofcontents

\section{Introduction}

\subsection{Framework}

Let $d$ be an integer. Let $X\in\R^{d\times n}$ be a $d\times n$-matrix and let $x\in \R^{d}$ be column vector.  We denote by a subscript $^t$ the transpose of vectors and matrices. 
There exist at least two ways to study the singular values of the matrix $(x,X)$ obtained by appending the column vector $x$ to the matrix $X$:
\begin{enumerate}
\item[{\bf (A1)}] Consider  the matrix 
\bea \label{add}
A & = & 
\left[
\begin{array}{c}
x^t \\
X^t
\end{array}
\right]
\left[
\begin{array}{cc}
x & X
\end{array}
\right]
=
\left[
\begin{array}{cc}
x^tx & x^tX \\
X^tx & X^tX
\end{array}
\right];
\eea
\item[{\bf (A2)}] Consider  the matrix 
\bean
\wdt{A} & = & 
\left[
\begin{array}{cc}
x & X
\end{array}
\right]\left[ 
\begin{array}{c}
x^t \\
X^t
\end{array}
\right]
= XX^t+xx^t.
\eean 
\end{enumerate}

On one hand, one may study in {\bf (A1)} the eigenvalues of the $(n+1)\times (n+1)$ hermitian matrix $A$, i.e. the matrix $X^tX$ augmented with an arrow matrix.

On the other hand, one will deal in {\bf (A2)}  with the eigenvalues of the $d\times d$ hermitian matrix $\wdt A$, which may be seen as a rank-one perturbation of $XX^t$. The matrices $A$ and $\wdt A$ have the same non-zeros eigenvalues, and in particular 
$\lb_{\max}(A) = \lb_{\max}(\wdt A)$.
Moreover, the singular values of 
the matrix $(x,X)$ are the square-root of the eigenvalues of the matrix $A$.

Equivalently, the problem of a rank-one perturbation can be rephrased as 
the one of controlling the perturbation of the singular values of a matrix after appending a column. 
\\

In this paper, we study a slightly more general framework than  {\bf (A1)}, that is the case of a matrix
\bea
\label{A}
A & = & 
\left[
\begin{array}{cc}
c & a^t \\
a & M
\end{array}
\right],
\eea
where $a\in \R^{d}$, $c\in \R$ and $M\in \R^{d\times d}$ is a symmetric matrix.

Our goal is to present new bounds on the extreme eigenvalues of $A$ as a function of 
the eigenvalues of $M$ and the norm of $a$, and we will focus on various applications. 
Indeed, this problem occurs in a variety of contexts such as the 
perturbation analysis of covariance matrices in statistics \cite{Nadler:AnnStat08}, 
the study of the Restricted Isometry Constant in Compressed Sensing \cite{Candes:CRAS10}, 
spectral graph theory and edge deletion \cite{BrouwerHaemers:Springer12}, 
control theory of complex networks \cite{PorfiriDiBernardo:Automatica08}, 
hitting time analysis for classical or quantum random walks \cite{ZhangEtAl:IEEETransComm12}, 
robust face recognition \cite{Qiu:IntJourInnovComp11}, wireless comunications \cite{ShenSuter:EURASIP09}, 
communication theory and signal processing \cite{ZhangEtAl:IEEETransComm12}, 
numerical methods for partial differential equations \cite{BlankEtAl:JCompPhys12}, 
numerical analysis of bifurcations \cite{DicksonEtAl:SIAMNumAl07},
among many applications. 

Notice further that in (\ref{A}) if $M$ and $A$ are positive definite, there exist $X\in\R^{d\times n}$ and $x\in \R^{d}$ such that $M=X^tX$ and $A$ can be written as in (\ref{add}) due to the Cholesky decomposition.

\subsection{Additional notations} The Kronecker symbol is denoted by $\delta_{i,j}$, i.e. $\delta_{i,j}=1$ if $i=j$ and
is equal to zero otherwise. We denote by $\|x\|_2$ the euclidian norm of a vector $x$ and by $\|A\|$ the associated operator norm (spectral norm) of a matrix $A$.

For any symmetric matrix $B\in \R^{d\times d}$ we will denote its eigenvalues by $\lb_1(B)
\ge \cdots \ge \lb_d(B)$. The largest eigenvalue will sometimes also be denoted by $\lambda_{\max}(B)$ and 
the smallest by $\lb_{\min}(B)$. The smallest nonzero eigenvalue of a positive semi-definite matrix $B$ will be denoted by $\lb_{\min>0}(B)$.

\subsection{Plan of the paper}
Section \ref{previous} is devoted to an overview of known results. Section \ref{main-results} presents new upper and lower bounds for the extreme eigenvalues. Section \label{normop} translates some previous results in terms of operator norm together with a slight variation. Finally, Section \ref{app} is concerned with the applications in Compressed sensing and graphs theory.

\section{Previous results on eigenvalue perturbation} \label{previous}

We now review some previous, old and recent results from matrix perturbation theory and 
apply them to our problem of appending a column. 

Obtaining precise estimates on the eigenvalues of a sum of two matrices (say $X+P$, considering $P$ as a perturbation) is a very difficult task in general. Weyl's and Horn's inequalities for instance can be employed and these bounds can be improved when knowing that the perturbation $P$ is small with respect to $X$ (see e.g. \cite[Chap. 6]{horn}). The whole point of the works \cite{batson} and \cite{benaych}, to name a few, is to understand how randomness can simplify this analysis.

\subsection{Weyl's inequalities}
The reference \cite{Tao:Blog10} gives an 
overview of many inequalities on the eigenvalues of sums of symmetric (and Hermitian) matrices.  
The Weyl inequalities are given as follows:
\begin{theo}[Weyl] Let $B$ and $B^\prime$ be symmetric real matrices in $\R^{d\times d}$ and let $\lb_j(B)$, $j=1,\ldots,d$, (resp. $\lb_j(B^\prime)$), denote the eigenvalues of $B$ (resp. $B^\prime$). Then, we have
\bean
\lb_{i+j-1} (B+B^\prime) & \le & \lb_i(B)+\lb_j(B^\prime),
\eean 
whenever $i,j\ge 1$ and $i+j-1\le n$. 
\label{weyl}
\end{theo}
\subsubsection{The arrowhead perturbation}
Consider the case where we would like to control the largest eigenvalue of $A$ with the eigenvalues of $M=X^tX$. We 
have the following result. 
\begin{prop}
We have 
\bean 
\lb_1(A) & \le & \max \{c,\lb_1(M)\} + \| a\|_2. 
\eean 
\end{prop}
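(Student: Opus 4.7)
The plan is to apply Weyl's inequality (Theorem~\ref{weyl}) with $i=j=1$ to a natural splitting $A = B + B'$, where $B$ collects the diagonal blocks of $A$ and $B'$ collects the off-diagonal blocks. Since Weyl yields $\lambda_1(A) \le \lambda_1(B) + \lambda_1(B')$, the goal reduces to identifying $\lambda_1(B)$ and $\lambda_1(B')$.

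Concretely, I would take
\begin{equation*}
B = \begin{pmatrix} c & 0 \\ 0 & M \end{pmatrix}, \qquad B' = \begin{pmatrix} 0 & a^t \\ a & 0 \end{pmatrix}.
\end{equation*}
The matrix $B$ is block-diagonal, so its spectrum is $\{c\} \cup \{\lambda_j(M)\}_{j=1}^d$, from which $\lambda_1(B) = \max\{c,\lambda_1(M)\}$ is immediate.

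The main (small) obstacle is computing $\lambda_1(B')$. Since $B'$ has rank at most $2$, I would search for eigenvectors of the form $(x_0,\alpha a)^t$: solving $B'(x_0,\alpha a)^t = \lambda (x_0,\alpha a)^t$ gives the two scalar equations $\alpha\|a\|_2^2 = \lambda x_0$ and $x_0 = \lambda\alpha$, so that $\lambda^2 = \|a\|_2^2$ and the nonzero eigenvalues of $B'$ are $\pm\|a\|_2$. (Alternatively, one can simply note that $\|B'\| = \|a\|_2$ by direct inspection, or observe $B'^2 = \mathrm{diag}(\|a\|_2^2, aa^t)$ whose top eigenvalue is $\|a\|_2^2$.) In particular $\lambda_1(B') = \|a\|_2$.

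Plugging these two identifications into Weyl's inequality yields
\begin{equation*}
\lambda_1(A) \le \lambda_1(B) + \lambda_1(B') = \max\{c,\lambda_1(M)\} + \|a\|_2,
\end{equation*}
which is the claimed bound. No further perturbation analysis is needed: the only non-routine step is the eigenvalue computation for the off-diagonal piece $B'$, and this is handled by the rank-two ansatz above.
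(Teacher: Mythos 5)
Your proof is correct and follows essentially the same route as the paper: the same splitting of $A$ into its block-diagonal part and the off-diagonal piece $E=\left[\begin{smallmatrix}0 & a^t\\ a & 0\end{smallmatrix}\right]$, followed by Weyl's inequality with $i=j=1$. The only (immaterial) difference is how $\lb_1(E)=\|a\|_2$ is obtained --- you use a rank-two eigenvector ansatz, while the paper invokes the variational characterization with Lagrange multipliers.
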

\begin{proof}
The Weyl inequalities for $i,j=1$ gives that 
\bea
\label{weyl1}
\lb_1(A) & \le & \lb_1\left(\left[
\begin{array}{cc}
c & 0 \\
0 & M
\end{array}
\right]\right)
+\lb_1(E)
\eea
with 
\bean 
E =
\left[
\begin{array}{cc}
0 & a^t \\
a & 0
\end{array}
\right].
\eean 
Moreover, using the variational representation of the maximum eigenvalue and the method of Lagrange multipliers, 
we have
$\lb_1(E) = \| a\|_2$.
Combining this with (\ref{weyl1}), we obtain the desired result. 
\end{proof}
The main fact to retain from this inequality is that if $x$ is orthogonal to all columns of 
$X$, then $a=0$ and the perturbation has no effect on the largest eigenvalue as long as $c\le \lb_1(M)$.
This elementary observation can be extrapolated to much more difficult situations, e.g. in
the spiked covariance model where a phase transition has been proved between concerning the ability to detect 
a spike or not, depending on the energy level of the spike \cite[Theorem 2.3]{Nadler:AnnStat08}. 

\subsubsection{The rank-one perturbation}
If we only want to study the perturbation of the largest eigenvalue, then we can consider 
the rank-one perturbation described by {\bf (A2)}. In this case, Weyl's bound gives the following 
result. 
\begin{prop}
We have 
\bean 
\lb_1(A) & \le & \lb_1(M) + \|x\|_2^2. 
\eean 
\end{prop}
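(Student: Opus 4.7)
The plan is to pass from the $(n+1)\times(n+1)$ arrowhead matrix $A$ to the $d\times d$ rank-one perturbation $\widetilde A = XX^t + xx^t$ introduced in \textbf{(A2)}, which shares its non-zero eigenvalues (hence its largest eigenvalue) with $A$. The claim then reduces to bounding $\lambda_1(\widetilde A)$ in terms of $\lambda_1(XX^t)$ and $\|x\|_2^2$.

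First, I would recall that since $A$ and $\widetilde A$ share their non-zero spectrum (as observed in the introduction), and since both matrices are positive semi-definite, we have $\lambda_1(A)=\lambda_1(\widetilde A)$. Similarly $\lambda_1(M)=\lambda_1(X^t X)=\lambda_1(XX^t)$, so it suffices to establish
\[
\lambda_1(XX^t+xx^t)\ \le\ \lambda_1(XX^t)+\|x\|_2^2.
\]

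Next I would apply Weyl's inequality (Theorem~\ref{weyl}) with $B=XX^t$, $B'=xx^t$ and $i=j=1$, yielding $\lambda_1(XX^t+xx^t)\le \lambda_1(XX^t)+\lambda_1(xx^t)$. The final step is to compute $\lambda_1(xx^t)$. Since $xx^t$ is a rank-one symmetric positive semi-definite matrix with range spanned by $x$, its unique non-zero eigenvalue is $\|x\|_2^2$ (with eigenvector $x/\|x\|_2$). Combining these gives the stated inequality.

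There is no real obstacle here: the work is entirely bookkeeping, combining the equality of non-zero spectra of $A$ and $\widetilde A$, Weyl's inequality, and the elementary spectral identity for a rank-one matrix. The proof is thus very short and parallels the one given for the arrowhead case, the only new ingredients being the switch to the rank-one viewpoint and the identification $\lambda_1(xx^t)=\|x\|_2^2$.
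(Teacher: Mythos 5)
Your proof is correct and follows essentially the same route as the paper: pass to the rank-one perturbation $\widetilde A = XX^t + xx^t$ using the equality of non-zero spectra, apply Weyl's inequality with $i=j=1$, and use $\lambda_1(xx^t)=\|x\|_2^2$. No meaningful difference from the paper's argument.
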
 
\begin{proof}
Set 
$\wdt{A}=\wdt{M}+xx^t$. Using that $\lb_1(A)=\lb_1(\wdt{A})$ and $\lb_1(M)=\lb_1(\wdt{M})$, we obtain from 
Theorem \ref{weyl} : 
\bean 
\lb_1(A) & \le & \lb_1(M)+\lb_1(xx^t). 
\eean 
Since $\lb_1(xx^t)=\|x\|_2^2$, the conclusion follows. 
\end{proof}
The main drawback of this inequality is that it does not take into account the geometry of the problem and 
in particular the angle between $X$ and the new vector $x$ that we want to append to $X$. This does not 
disqualify the rank-one perturbation approach to controlling the maximum eigenvalue as  
will be shown in Subsection \ref{ipsen}.

\subsection{An inequality of Li and Li}

They prove a general inequality concerning the perturbation of eigenvalues under off-block diagonal perturbations. We specify their result, \cite[Theorem 2]{Li-Li}, in our context:
\bea \label{li-li}
\left|\lb_1(A)-\max(c,\lb_1(M))\right| & \le & \frac{2\|a\|_2^2}{\eta_1+\sqrt{\eta_1^2+4\|a\|_2^2}},
\eea
with $\eta_1=|c-\lb_1(M)|$. In their paper, $\wdt \lb_1$ is actually $\max(c,\lb_1(M))$ here.

We refer to \cite{Li-Li} and references therein for the history of such inequalities.

\subsection{An inequality of Ipsen and Nadler} \label{ipsen}
In \cite{IpsenNadler:SIAMMA09}, the authors propose a bound for the eigenvalues of 
$\wdt{A}$ in the problem of rank one perturbation {\bf (A2)}. The following 
theorem is a corollary of their main result where we restrict our attention to the largest 
eigenvalue. 
\begin{theo}
Let $\wdt{M}\in \C^{d\times d}$ denote an Hermitian matrix and let $x\in \C^d$. Let $V_1$ (resp. $V_2$) denote the 
eigenvector associated to the eigenvalue $\lb_1(\wdt{M})$ (resp. $\lb_2(\wdt{M})$). Let 
$\wdt{A}=\wdt{M}+xx^t$. Then 
\bean 
\lb_1(\wdt{M}) +\delta_{\min} & \le \lb_1(\wdt{A}) \le &  \lb_1(\wdt{M}) +\delta_{\max}, 
\eean  
with 
\bean 
\delta_{\min} & = & \frac12\left(\|P_{\langle (V_1,V_2)\rangle}(x)\|_2^2-{\rm gap}_2+\sqrt{({\rm gap}_2+\|P_{\langle (V_1,V_2)\rangle}(x)\|_2^2)^2-4 \ {\rm gap}_2 \|P_{\langle (V_2)\rangle}(x)\|_2^2} \right) \\
\delta_{\max} & = & \frac12\left(\|x\|_2^2-{\rm gap}_2+\sqrt{({\rm gap}_2+\|x\|_2^2)^2-4 \ {\rm gap}_2 \|P_{\langle (V_2,\ldots,V_d)\rangle}(x)\|_2^2} \right),
\eean 
where $(V_i,\ldots,V_j)$, $1\le i\le j\le d$, denotes the vector space generated by $V_i,\ldots,V_j$ and 
$P_{\langle (V_i,\ldots,V_j)\rangle}$ denotes the orthogonal projection onto this space, and  
\bean 
{\rm gap}_2 & = & \lb_1(\wdt M)-\lb_2(\wdt M). 
\eean 
\end{theo}
This inequality has been used in various applications such as control of complex systems 
\cite{PorfiriDiBernardo:Automatica08}, quantum information theory \cite{ChiangEtAl:QuantInfProc13},
communication theory and signal processing \cite{ZhangEtAl:IEEETransComm12}, numerical methods for partial differential 
equations \cite{BlankEtAl:JCompPhys12}. One drawback of using this result in our context is that we have 
to know the spacing ${\rm gap}_2$ for the second eigenvalue. Moreover, the upper bound 
depends on $\|x\|_2^2$ and does not take into account the scalar products of $x$ with the columns of $X$, 
which may lead to serious overestimation of the perturbation, especially in the case of random matrices.

\section{Simple proofs of the perturbation bounds of the extreme singular values}\label{main-results}

In this section, we give an alternative proof of Li-Li's inequality (\ref{li-li}) and obtain in passing a better lower bound. These bounds do not depend on the spacing ${\rm gap}_2$ unlike in 
\cite{IpsenNadler:SIAMMA09}.

\subsection{The maximum eigenvalue}

The following theorem provides sharp upper bounds for $\lb_{\max}(A)$, 
and lower bounds on $\lb_{\min}(A)$, depending on various information on the sub-matrix $M$ of $A$. As discussed 
above, this problem has close relationships with our problem of appending a column to a given rectangular matrix,
because $\lb_1(\wdt{A})=\lb_1(A)$. 

\begin{theo}[Li-Li's inequality and a lower bound]
\label{main}
Let $d$ be a positive integer and let $M\in \C^{d\times d}$ be an Hermitian matrix, whose eigenvalues are $\lambda_1\ge \cdots \ge \lambda_{d}$ with corresponding eigenvectors $(V_1,\cdots,V_d)$. Set $c\in \R$, $a\in \C^{d}$.
Let $A$ be given by (\ref{A}). Therefore:
\beq \label{lbmax}
\frac{2\la a,V_1\ra^2}{\eta_1+\sqrt{\eta_1^2+4\la a,V_1\ra^2}} \le \lb_{1}(A)-\max(c,\lb_1) \le  \frac{2\|a\|^2}{\eta_1+\sqrt{\eta_1^2+4\|a\|^2}},
\eeq
with
\bean
\eta_1 & = & |c-\lb_1|.
\eean
\end{theo}

\begin{rema} \ \rm
\begin{itemize}
\item Inequality (\ref{lbmax}) is sharp: the upper bound is reached when choosing $M=\Id$, $c=1$ and any $a$, so that $\lb_{\max}(A)=1+\|a\|$;
\item The lower bound in (\ref{lbmax}) is better than (\ref{li-li}) since we have:
\bean
\lb_{1}(A)\ge\max(c,\lb_1)+\frac{2\la a,V_1\ra^2}{\eta_1+\sqrt{\eta_1^2+4\la a,V_1\ra^2}} 
\ge \max(c,\lb_1)-\frac{2\|a\|^2}{\eta_1+\sqrt{\eta_1^2+4\|a\|^2}}.
\eean
Our lower bound is in particular consistent with Cauchy interlacing theorem, which states that $\lb_{1}(A)\ge \lb_1$. This lower bound can also be obtained by other methods as a Householder transformation, as pointed out by an anonymous referee; 
\item A great feature of Theorem 2 of Li and Li in \cite{Li-Li} is that it holds for all eigenvalues and for block perturbations. 
\end{itemize}
\end{rema}

\begin{proof}

Let $M=VDV^*$ denote the eigenvalue decomposition 
of $M$, i.e. $V=(V_1,\cdots,V_d)$ where the $V_i$'s are the orthonormal eigenvectors of $M$ and $D$ is a diagonal matrix whose diagonal entries are the real eigenvalues $\lambda_1\ge \cdots \ge \lambda_{d}$. 
We can write
\bean
A & = & 
\left(
\begin{array}{cc}
1 & 0  \\
0 & V
\end{array}
\right)
\left(
\begin{array}{cc}
c & a^*V  \\
V^* a & D
\end{array}
\right)
\left(
\begin{array}{cc}
1 & 0  \\
0 & V^*
\end{array}
\right),
\eean
and we set 
\bean
\label{blbl}
B & = & 
\left(
\begin{array}{cc}
c & b^*  \\
b & D
\end{array}
\right), \quad b = V^* a,
\eean
where we use the notation $b_j:=\la a,V_j\ra$.
Therefore, $A$ and $B$ have the same spectra and in particular,
\bea
\lb_{\max}(A) & = & \lb_{\max}(B).
\eea 

As in \cite{cc}, we compute the characteristic polynomial of the arrow matrix $B$:
\bean
P_B(\lb) & = & (c-\lb)\prod_{i=1}^d(\lb_i-\lb)- \sum_{i=1}^d\prod_{j\neq i}(\lb_j-\lb) b_j^2.
\eean
%Since for all $k$, $P_B(\lb_k)=\prod_{j\neq k}(\lb_j-\lb_k) b_j^2 \neq 0$, the roots of $P_B$ are also the roots of
Let us define the function $f$ on $\R\setminus\{\lb_i,1\le i\le d\}$ as
\bean
f(\lb)  := P_B(\lb) \prod_{i=1}(\lb_i-\lb)^{-1} = c-\lb +\sum_{j=1}^{d} \frac{b_j^2}{\lb-\lb_j},
\eean
which is decreasing on $(\lb_1,+\infty)$ (even if $b=0$). 

We now assume that $b_1=\la a,V_1\ra\neq 0$. Thus $\lim_{\lb\to\lb_1}f(\lb)=+\infty$. From $\lim_{\lb\to+\infty}f(\lb)=-\infty$, we then deduce that the continuous function $f$ has a unique root on $(\lb_1,+\infty)$, that is 
\bean
\lb_{\max}(B) & > & \lb_1.
\eean

For all $\lb > \lb_1$, we have 
\bea \label{g}
f(\lb) & \le & c-\lb +  \frac{\|b\|_2^2}{\lb-\lb_1} := g(\lb).
\eea
For the same reasons as $f$, the function $g$ has a unique root $\lb^*$ on $(\lb_1,+\infty)$. 
Since $f$ is decreasing on $(\lb_1,+\infty)$ and $f(\lb_{\max}(B))=0=g(\lb^*)\ge f(\lb^*)$, we deduce:
\bean
\lb_{\max}(B) & \le & \lb^*.
\eean
We have
\bean
(\lb^*-c)(\lb^*-\lb_1) & = & \|b\|_2^2,
\eean
and thus $\lb^*$ is a root of the polynomial
\bean
Q(x) & = & (x-c)(x-\lb_1)-\|b\|^2\\
	& = & x^2 - (c+\lb_1) x + c\lb_1 - \|b\|_2^2. 
\eean
The discrimant of $Q$ reads:
\bean
\Delta & = & (c+\lb_1)^2 - 4(c\lb_1 - \|b\|_2^2) \\
 & =& (c-\lb_1)^2 + 4 \|b\|_2^2 >0.
\eean
Since $Q(\lb_1)<0$ and the dominant coefficient of $Q$ is positive, we deduce that $\lb^*$ is actually the greatest root of $Q$. Hence, noting that $\|b\|_2=\|a\|_2$,
\bea \label{lb*}
\lb^* & = &  \frac{c+\lb_1}{2}+\frac12 \sqrt{(c-\lb_1)^2+4\| a\|^2}.
\eea

Assume that $\la a,V_1\ra \neq 0$. In order to find a lower bound for $\lb_{\max}(B)$, we perform the same reasoning by writing
\bean
f(\lb) & \ge & c-\lb +  \frac{\la a,V_1\ra^2}{\lb-\lb_1},
\eean
and considering the polynomial $(x-c)(x-\lb_1)-\la a,V_1\ra^2$. 
%The lower bound in (\ref{lbmax}) then follows.\\

Finally, we have:
\beq \label{}
\frac{c+\lb_1}{2}+\frac12 \sqrt{(c-\lb_1)^2+4\la a,V_1\ra^2} \le \lb_{1}(A) \le \frac{c+\lb_1}{2}+\frac12 \sqrt{(c-\lb_1)^2+4\| a\|^2}.
\eeq
Set $\eta_1= |c-\lb_1|$. Since
\bean
2\max(\lb_1,c) & = & c+\lb_1 + \eta_1,
\eean
we deduce 
\bean
\frac12\left(\sqrt{\eta_1^2+4\la a,V_1\ra^2}-\eta_1\right) \le \lb_{1}(A)-\max(\lb_1,c) \le \frac12\left(\sqrt{\eta_1^2+4\|a\|^2}-\eta_1\right).
\eean
Multiplying by the "conjugate quantity" yields the lower and the upper bounds in (\ref{lbmax}).\\

The case $\la a,V_1\ra= 0$ can be treated by standard continuity arguments: consider a continuous $\e\mapsto a(\e)$ such that 
for all $\e>0$, $\la a(\e),V_1\ra\neq0$ and $a(0)=a$. Ones then writes (\ref{lbmax}) for $\e>0$ and passes to the limit as $\e\to 0$.

\end{proof}

\begin{coro}[Weyl's inequality and Matthias' inequality]
In particular, the following simple perturbation bounds hold:
\bea
\lb_{1}(A) & \le &\max(c,\lb_{1}) + \|a\|_2 \label{ineg1max}\\
\lb_{1}(A) & \le &\max(c,\lb_{1}) + \frac{\|a\|_2^2}{|\lb_{1} - c|},   \label{ineg2max}
\eea
\end{coro}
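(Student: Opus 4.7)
The plan is to derive both inequalities directly from the upper bound already established in Theorem~\ref{main}, namely
\[
\lb_{1}(A)-\max(c,\lb_1) \;\le\; \frac{2\|a\|_2^{2}}{\eta_1+\sqrt{\eta_1^{2}+4\|a\|_2^{2}}},
\qquad \eta_1=|c-\lb_1|.
\]
Thus it suffices to majorize the right-hand side in two different ways.

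For \eqref{ineg1max}, I would simply drop $\eta_1$ in the denominator and use $\sqrt{\eta_1^{2}+4\|a\|_2^{2}}\ge 2\|a\|_2$ to obtain
\[
\frac{2\|a\|_2^{2}}{\eta_1+\sqrt{\eta_1^{2}+4\|a\|_2^{2}}}\;\le\;\frac{2\|a\|_2^{2}}{2\|a\|_2}\;=\;\|a\|_2,
\]
which is exactly the Weyl-type bound \eqref{ineg1max}. This step implicitly handles the degenerate case $a=0$ by continuity (the fraction extends to $0$).

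For \eqref{ineg2max}, I would instead bound the denominator from below by $2\eta_1$, using the trivial inequality $\sqrt{\eta_1^{2}+4\|a\|_2^{2}}\ge \eta_1$ (valid whenever $\eta_1>0$, i.e.\ $c\neq \lb_1$, which is exactly the regime in which the right-hand side of \eqref{ineg2max} is finite). This gives
\[
\frac{2\|a\|_2^{2}}{\eta_1+\sqrt{\eta_1^{2}+4\|a\|_2^{2}}}\;\le\;\frac{2\|a\|_2^{2}}{2\eta_1}\;=\;\frac{\|a\|_2^{2}}{|\lb_1-c|},
\]
which is the Mathias-type bound \eqref{ineg2max}.

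There is no real obstacle here; the only point to be mildly careful about is ensuring the argument covers the edge cases $a=0$ (trivial, both sides of \eqref{ineg1max} vanish and the right-hand side of \eqref{ineg2max} is nonnegative) and $\eta_1=0$ (in which case \eqref{ineg2max} is vacuous while \eqref{ineg1max} reduces to $\lb_1(A)\le \lb_1+\|a\|_2$, still delivered by the first computation above).
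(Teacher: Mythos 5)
Your proposal is correct and follows exactly the paper's own (much terser) argument: both bounds are obtained by minorizing the denominator $\eta_1+\sqrt{\eta_1^{2}+4\|a\|_2^{2}}$ in the upper bound of Theorem~\ref{main}, dropping $\eta_1$ for \eqref{ineg1max} and dropping $\|a\|_2$ for \eqref{ineg2max}. Your explicit treatment of the edge cases $a=0$ and $\eta_1=0$ is a small but welcome addition over the paper's one-line proof.
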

\proof Inequality (\ref{ineg1max}) (resp. (\ref{ineg2max})) follows from (\ref{lbmax})
by using $\eta_1\ge 0$ (resp. $\|a\| \ge0$). \qed

\subsection{Perturbation of the smallest nonzero eigenvalue}

The same technics also allows to obtain lower bounds for the smallest nonzero eigenvalue, which are also direct consequences of Li-Li's inequality.

\begin{theo}
\label{smallestnonzero}
Let $d$ be a positive integer and let $M\in \C^{d\times d}$ be a positive semi-definite Hermitian matrix, whose eigenvalues are $\lambda_1\ge \cdots \ge \lambda_{d}$ with corresponding eigenvectors $(V_1,\cdots,V_d)$. Set $c\in \R$, $a\in \C^{d}$.
Let $A$ be given by (\ref{A}). Assume that $M$ has rank $r\le d$. Therefore:
\beq \label{lbmin}
\lb_{r+1}(A) \ge \min(c,\lb_r)- \frac{2\|a\|^2}{\eta_r+\sqrt{\eta_r^2+4\|a\|^2}},
\eeq
with
\bean
\eta_r & = & |c-\lb_r|.
\eean
\end{theo}

In particular, the following perturbation bounds of Weyl and Mathias hold:

\begin{coro}
\bea
\lb_{r+1}(A) & \ge &\min(c,\lb_{r}) - \|a\|_2 \label{ineg1min}\\
\lb_{r+1}(A) & \ge &\min(c,\lb_{r}) - \frac{\|a\|_2^2}{|c-\lb_{r}|}.   \label{ineg2min}
\eea
\end{coro}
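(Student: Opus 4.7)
The plan is to mirror the proof of Theorem \ref{main} but to analyze the secular function $f$ on a different interval, namely an interval whose right endpoint is $\lambda_r$ and not $+\infty$. First, I would diagonalize $M=VDV^*$ and reduce $A$ to the arrow matrix
\begin{equation*}
B \;=\; \begin{pmatrix} c & b^* \\ b & D \end{pmatrix}, \qquad b=V^*a,\qquad b_j=\langle a,V_j\rangle,
\end{equation*}
which has the same spectrum as $A$. Assume as a first step that $b_j\neq 0$ for every $j$; the degenerate case is then obtained by a standard continuity argument, exactly as at the end of the proof of Theorem \ref{main}. Since $M$ has rank $r$, the diagonal entries of $D$ are $\lambda_1\geq\cdots\geq\lambda_r>0=\lambda_{r+1}=\cdots=\lambda_d$. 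As in Theorem \ref{main}, the nonzero eigenvalues of $A$ coincide with the zeros of
\begin{equation*}
f(\lambda)\;=\;c-\lambda+\sum_{j=1}^{d}\frac{b_j^2}{\lambda-\lambda_j}
\;=\;c-\lambda+\sum_{j=1}^{r}\frac{b_j^2}{\lambda-\lambda_j}+\frac{\sum_{j=r+1}^{d}b_j^2}{\lambda}
\end{equation*}
on the open intervals separating its poles. The derivative is strictly negative, so $f$ is monotonically decreasing from $+\infty$ to $-\infty$ on each such interval. Combined with Cauchy's interlacing (which places $\lambda_{r+1}(A)$ in $[0,\lambda_r]$), this identifies $\lambda_{r+1}(A)$ as the unique root of $f$ in the interval $(0,\lambda_r)$.

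The key step is to lower bound $f$ on $(0,\lambda_r)$ by a simpler function whose root can be computed explicitly. For $\lambda\in(0,\lambda_r)$ the terms with $j>r$ (i.e.\ $\lambda_j=0$) contribute a nonnegative quantity $\tfrac{1}{\lambda}\sum_{j>r}b_j^2\geq 0$, while for each $j\leq r$ one has $\lambda_j\geq \lambda_r>\lambda$, hence
\begin{equation*}
\frac{b_j^2}{\lambda-\lambda_j}\;=\;-\frac{b_j^2}{\lambda_j-\lambda}\;\geq\;-\frac{b_j^2}{\lambda_r-\lambda}.
\end{equation*}
Summing and using $\sum_{j=1}^{r}b_j^2\leq\|b\|_2^2=\|a\|_2^2$ yields, on $(0,\lambda_r)$,
\begin{equation*}
f(\lambda)\;\geq\; g(\lambda)\;:=\; c-\lambda-\frac{\|a\|_2^2}{\lambda_r-\lambda}.
\end{equation*}

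The function $g$ is strictly decreasing on $(-\infty,\lambda_r)$ from $+\infty$ to $-\infty$, and its unique root there is the smaller root of the quadratic $(x-c)(x-\lambda_r)=\|a\|_2^2$, namely
\begin{equation*}
\lambda^{**}\;=\;\frac{c+\lambda_r}{2}-\tfrac12\sqrt{(c-\lambda_r)^2+4\|a\|_2^2}.
\end{equation*}
Since $g$ is decreasing and dominated by $f$ on $(0,\lambda_r)$, the inequality $g(\lambda_{r+1}(A))\leq f(\lambda_{r+1}(A))=0=g(\lambda^{**})$ forces $\lambda_{r+1}(A)\geq\lambda^{**}$. Using $2\min(c,\lambda_r)=c+\lambda_r-\eta_r$ and multiplying the quantity $\tfrac12\bigl(\sqrt{\eta_r^2+4\|a\|_2^2}-\eta_r\bigr)$ by its conjugate (as in the last lines of the proof of Theorem \ref{main}) converts this directly into the stated bound~\eqref{lbmin}. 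The main obstacle is a careful bookkeeping of the location of $\lambda_{r+1}(A)$ among the roots of $f$ in the presence of the pole at $0$ (coming from the $d-r$ copies of $\lambda_j=0$) and the handling of the degenerate situations $b_j=0$ or $\sum_{j>r}b_j^2=0$, both of which are resolved by the same continuity argument used at the end of the proof of Theorem~\ref{main}.
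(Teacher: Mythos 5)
Your argument is correct and is essentially the paper's own route: the corollary is stated there as an immediate consequence of Theorem \ref{smallestnonzero}, and the (omitted) proof of that theorem is precisely the secular-function comparison you carry out, i.e.\ the method of Theorem \ref{main} transposed to the interval to the left of $\lambda_r$, with the same continuity argument for the degenerate cases. The only thing missing is the final one-line step: what is to be proved is not (\ref{lbmin}) itself but the two weaker bounds, which follow from (\ref{lbmin}) via $\eta_r+\sqrt{\eta_r^2+4\|a\|_2^2}\ge 2\|a\|_2$ for (\ref{ineg1min}) and $\ge 2\eta_r$ for (\ref{ineg2min}), exactly as the paper deduces (\ref{ineg1max})--(\ref{ineg2max}) from (\ref{lbmax}).
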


\bigskip

\subsection{Bounds on the perturbation of the operator norm}\label{normop}

We provide here three bounds on the operator norm: the first and second inequalities are easy consequences of Theorem \ref{main}, the third one is based on a new trick.

\begin{cor}
Let $d$ be an integer, $a\in \C^{d}$, $c\in \R$ and let $M\in \C^{d\times d}$ be an Hermitian matrix. 
Let $A$ be given by (\ref{A}). Then the following inequalities hold:
\bea
\left\|A\right\| & \le & \max(c,\left\|M\right\|) + \|a\|_2  \label{ineg1'} \\
\left\|A\right\| & \le & \|M\| +  \frac{\|a\|_2^2}{\|M\|- c},\quad {\rm if}\ c\le \lb_{\max}(M) \label{ineg2'} \\
\left\|A\right\| & \le & \|M\| + \frac{|c|}{2} +\frac{\|a\|_2^2+c^2/8}{\|M\|}. \label{ineg3}
\eea
\end{cor}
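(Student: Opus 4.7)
The first two inequalities are direct. Since $A$ is Hermitian, $\|A\|=\max(\lb_1(A),-\lb_{d+1}(A))$ and $-\lb_{d+1}(A)=\lb_1(-A)$; applying (\ref{ineg1max}) and (\ref{ineg2max}) to both $A$ and $-A$ (using $\lb_1(M)\le\|M\|$ and $\lb_1(-M)=-\lb_d(M)\le\|M\|$) yields (\ref{ineg1'}) and (\ref{ineg2'}) immediately. The hypothesis $c\le\lb_{\max}(M)$ in (\ref{ineg2'}) ensures $\|M\|-c>0$, so the denominator in (\ref{ineg2max}) is legitimate.

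For (\ref{ineg3}) (where we may assume $\|M\|>0$, the inequality being vacuous otherwise), the starting point is the reformulation of Theorem \ref{main} that any real $B$ with $B\ge\max(c,\lb_1(M))$ and $(B-c)(B-\lb_1(M))\ge\|a\|_2^2$ satisfies $\lb_1(A)\le B$, since $\lb_1(A)$ lies at or below the larger root of the quadratic $(x-c)(x-\lb_1(M))=\|a\|_2^2$. The ``new trick'' I would employ is simply to verify these two conditions for the explicit candidate
\bean
B \;:=\; \|M\|+\frac{|c|}{2}+\frac{\|a\|_2^2+c^2/8}{\|M\|}.
\eean
Since $B\ge\|M\|\ge\lb_1(M)$, it suffices to establish the stronger inequality $(B-c)(B-\|M\|)\ge\|a\|_2^2$; running the same verification with $(-c,-M)$ in place of $(c,M)$ then bounds $\lb_1(-A)$ by the same $B$, whence $\|A\|\le B$.

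The verification reduces to a clean algebraic identity. Writing $s=(\|a\|_2^2+c^2/8)/\|M\|$ so that $B-\|M\|=|c|/2+s$, direct expansion gives, for $c\ge 0$,
\bean
(B-c)(B-\|M\|)-\|a\|_2^2 \;=\; \frac{c(4\|M\|-c)}{8}+s^2,
\eean
which is manifestly non-negative for $c\in[0,4\|M\|]$; for $c>4\|M\|$ the lower bound $s\ge c^2/(8\|M\|)>c/2$ gives $s^2>c^2/4$, which dominates the now-negative first term. The case $c\le 0$ is symmetric through the analogous expansion of $(B+c)(B-\|M\|)$, and the side conditions $B\ge c$ and $B\ge -c$ follow from a one-line AM--GM bound on $\|M\|+c^2/(8\|M\|)$. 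The main obstacle I anticipate is uncovering this pleasant form: the constants $1/2$ in front of $|c|$ and $1/8$ in front of $c^2/\|M\|$ in $B$ are chosen precisely so that the cross terms in the expansion of $(B-c)(B-\|M\|)$ cancel, leaving the compact non-negative remainder above. Once $B$ is written down, the rest is mechanical.
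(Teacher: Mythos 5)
Your treatment of (\ref{ineg3}) is correct but takes a genuinely different route from the paper's. The paper's ``new trick'' is structural: it embeds the arrowhead matrix $B$ into the larger symmetrized arrowhead $B'$ of (\ref{blbl2}), whose diagonal blocks are $D$ and $-D$, uses $\|A\|\le\|B'\|$, and reruns the secular-equation analysis on $B'$, ending with the quadratic $R(x)=x^2-cx-(2\|b\|_2^2+\|M\|^2)$ and the expansion $\sqrt{1+u}\le 1+u/2$. You instead take the quadratic characterization already implicit in the proof of Theorem \ref{main} --- $\lb_1(A)\le B$ whenever $B\ge\max(c,\lb_1(M))$ and $(B-c)(B-\lb_1(M))\ge\|a\|_2^2$ --- and verify it for the explicit candidate. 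I checked the algebra: with $s=(\|a\|_2^2+c^2/8)/\|M\|$ one indeed gets $(B-c)(B-\|M\|)-\|a\|_2^2=c(4\|M\|-c)/8+s^2$ for $c\ge 0$, the case $c>4\|M\|$ is covered by $s>c/2$, and $B\ge|c|$ follows from AM--GM, so the $-A$ side goes through with the same $B$. Your argument is more elementary (no auxiliary matrix, no second secular function) and makes visible why the constants $1/2$ and $1/8$ appear; the paper's construction, by contrast, produces a single matrix $B'$ whose top eigenvalue dominates $\|A\|$, which is a reusable device.

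The one genuine gap is the one-line dispatch of (\ref{ineg2'}). It does not follow ``immediately'' from (\ref{ineg2max}) by inserting $\lb_1(M)\le\|M\|$, because the right-hand side $t+\|a\|_2^2/(t-c)$ is not monotone in $t$: if $\lb_1(M)$ is just above $c$ while $\|M\|$ is much larger, (\ref{ineg2max}) gives an arbitrarily large bound from which (\ref{ineg2'}) cannot be read off termwise. The fix is exactly your own reformulation: check $B=\|M\|+\|a\|_2^2/(\|M\|-c)$ against the quadratic, i.e. $(B-c)(B-\|M\|)\ge\|a\|_2^2$, which holds trivially; this is in substance what the paper does by bounding the discriminant as $\sqrt{\Delta}\le\sqrt{(\|M\|-c)^2+4\|a\|_2^2}$ before expanding. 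Separately --- and this affects the paper's proof as much as yours --- the passage from $\lb_1(\pm A)$ to $\|A\|$ in (\ref{ineg1'}) and (\ref{ineg2'}) only works as stated when $-c$ is also dominated by the right-hand sides: applying (\ref{ineg1max}) to $-A$ yields $\max(-c,\|M\|)+\|a\|_2$, so the two halves combine to $\max(|c|,\|M\|)+\|a\|_2$ rather than $\max(c,\|M\|)+\|a\|_2$, and for $c<-\|M\|$ (e.g. $a=0$, $M=(1)$, $c=-100$) the displayed (\ref{ineg1'}) and (\ref{ineg2'}) fail. Either $c$ should be replaced by $|c|$ there, or a sign condition on $c$ added; your write-up inherits this defect from the statement rather than introducing it.
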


\begin{rema}\rm 
Notice that (\ref{ineg2'}) is better than (\ref{ineg1'}) if
\bean
\|a\| & \le & \|M\|-c,
\eean
and that (\ref{ineg3}) is better than (\ref{ineg1'}) if
\bean
 \frac{c}{2}+ \frac{\|a\|_2^2+c^2/8}{\|M\|} & \le & \|a\|.
\eean
\end{rema}

\begin{proof}
We obtain (\ref{ineg1'}) by applying (\ref{ineg1max}) with $-A$ and by noticing that $\lb_{\max}(A) \le \|A\|$.

Now assume that $c\le \lb_{\max}(M)$. We bound $\Delta$ as:
\bean
\sqrt{\Delta} \le \sqrt{(\|M\|- c)^2 + 4 \|a\|_2^2},
\eean
and then
\bean
2\lb^* & \le &  2\|M\| +  \frac{2 \|a\|_2^2}{\|M\|- c},
\eean
which yields (\ref{ineg2'}).

To prove (\ref{ineg3}), we now consider, instead of $B$,
\bea
\label{blbl2}
B' & = & 
\left(
\begin{array}{ccc}
c & b^t & b^t \\
b & D & 0 \\
b &  0         & -D
\end{array}
\right).
\eea 
Since the operator norm increases by adding elements to a matrix , we obtain  
\bea
\label{blbl}
\left\|A \right\| & \le & \|B'\|
\eea 
The functions $f,g$ in (\ref{g}) are now replaced resp. by,
\bean 
\wdt f(\lb) & = & c-\lb +\sum_{j=1}^{d} b_j^2 \left(\frac1{\lb-\lb_j}+\frac1{\lb+\lb_j}\right) = c-\lb +\sum_{j=1}^{d} b_j^2 \ 
\frac{2\lb}{\lb^2-\lb_j^2} \\
\wdt g(\lb) & = & c-\lb +\|b\|_2^2 \ \frac{2\lb}{\lb^2-\|M\|^2}, \quad \lb>\|M\|.
%Q'(x) & = & (c-x)(x^2-\|M\|^2) +2x \|b\|_2^2.
\eean 
If $c\le 0$ then
\bean
\wdt f(\lb)\le\wdt g(\lb) \le \lb +\|b\|_2^2 \ \frac{2\lb}{\lb^2-\|M\|^2} :=h(\lb).
\eean
Let $x^*$ be a root of $h$. As previously, $\wdt f(\lb_{\max}(\wdt B))=0=h(x^*)\ge \wdt f(x^*)$, and then
\bean
\lb_{\max}(\wdt B) & \le & x^*.
\eean
But $x^*$ is less than the greatest root of the polynomial $x\mapsto x^2-\|M\|^2+2\|b^2\|$, that is:
\bean
x^* & \le & \sqrt{\|M\|^2+2 \|b\|_2^2}.
\eean
If $c>0$, we notice that 
\bean
(\lb^2-\|M\|^2)(c-\lb) + 2\lb \|b\|_2^2 & = & -\lb^3 +c \lb^2+(2\|b\|_2^2+\|M\|^2) \lb - c\|M\|^2 \\
	& \le & -\lb^3 +c \lb^2+(2\|b\|_2^2+\|M\|^2) \lb,
\eean
and we set
\bean
R(x) &= & x^2 -c x - (2\|b\|_2^2+\|M\|^2).
\eean
The greatest root $x^*$ of $R$ reads:
\bean
x^* & \le & \frac{c}{2}+\sqrt{\frac{c^2}{4}+\|M\|^2+2 \|b\|_2^2} \\
	 & \le &  \frac{c}{2}+ \|M\|\ \sqrt{1+\frac{2 \|b\|_2^2+c^2/4}{\|M\|^2}} \\
& \le & \frac{c}{2}+ \|M\| \ +\frac{\|b\|_2^2+c^2/8}{\|M\|}.
\eean  
Repeating the analysis with $-A$ yields (\ref{ineg3}) as desired.
\end{proof}

\bigskip

\section{Applications}
\label{app}

As already mentionned in the introduction, perturbations bounds on the extreme eigenvalues have many applications 
in science and engineering and some references were proposed. 
In this section, we focus two more applications where quadratic inequalities as the 
upper bound (\ref{lbmax}) can yield some improvements in the order of magnitude for the perturbed system.

\subsection{Restricted isometry constant and coherence in Compressed Sensing}

\subsubsection{General framework}
%The field of Compressed Sensing (CS) is a new research area which emerged after the work of Cand\`es, Romberg and Tao \cite{CandesRombergTao:IEEEIT06}. 
The purpose of Compressed Sensing (CS) is to study the various possible strategies 
for constructing efficient sensors allowing the recovery of very sparse signals in a high dimensional 
space (See e.g. the pioneering  work of Cand\`es, Romberg and Tao
\cite{CandesRombergTao:IEEEIT06}). The possiblity of building such types of sensors was first discovered through simulations in 
the study of Magnetic Resonnance Imaging, where sparsity in a certain dictionary was used in order to 
reconstruct the signal from much fewer measurements than was previously imagined. Since then, Compressed 
Sensing has found many applications as can be seen from the blog "Nuit Blanche" maintained by Igor Caron. 

The problem can be expressed mathematically as the one of solving the linear system 
\bean 
y & = & X \beta + \sigma \epsilon
\eean 
in the variable $\beta$, where $X\in \R^{n\times p}$, $\sigma\in \R_+$ and $\epsilon$ is a random noise. A major breakthrough 
occured in late 2005-early 2006 when \cite{CandesRombergTao:IEEEIT06}, \cite{CandesTao:IEEEIT06}, \cite{CandesRombergTao:CPAM06} 
and \cite{CandesTao:IEEEIT05} appeared. One of the main discoveries contained in these works 
is that the vector $\beta$ can be recovered exactly even when $p$ is much larger than $n$ 
and $n$ is as small as a constant times $s\log(p/s)$. The assumptions initially required that $\sigma=0$ and $\beta$ is $s$-sparse 
and the results were obtained for most $X$ drawn with i.i.d. components with standard gaussian or $\pm$1-Bernoulli distribution. It 
was then obtained in \cite{CandesRombergTao:CPAM06} and \cite{CandesPlan:AnnStat09} that the support 
of $\beta$ can be exactly recovered in the noisy case $\sigma > 0$ when $n$ is roughly of the same order. 
A basic property, which emerged from the analysis as a tool for proving the reconstructibility from few measurements, 
is the Restricted Isometry Property, which requires that all the submatrices $X_T$ have their singular values in the interval 
$[1-\rho,1+\rho]$ for some constant $\rho \in (0,1/2)$. Several authors \cite{Tropp:ACHA08}, \cite{Tropp:CRAS08}
and \cite{CandesPlan:AnnStat09} subsequently noticed that, assuming the columns of $X$ to be $\ell_2$-normalized, 
most submatrices $X_T$ obtained by selecting the columns indexed by $T$ with $|T|$ such that 
\bea 
|T| & \le & \frac{p}{\log p} \ \frac{C}{\|X\|^{2}} \label{ups}
\eea
for some constant $C$, have their singular values in the interval $[1-\rho,1+\rho]$ for some constant $\rho \in (0,1/2)$. 
Recall that the coherence $\mu(X)$ is defined by 
\bean
\mu(X) & = & \max_{j\neq j^\prime} |X_j^t X_{j^\prime}|.
\eean
This latter property can be interpreted in a probabilistic setting: let $T$ be a random subset of $\{1,\ldots,n\}$ 
drawn with uniform distribution over all subsets with cardinal bounded from above as in (\ref{ups}). 
Then, with high probability, $\|X_T^tX_T-I\|\le \rho$. 
\subsubsection{Perturbation of the singular values}
When an additional column is appended to the matrix $X$, one may wonder what is the impact of this operation on the 
localisation of the extreme singular values of all submatrices with $s$ columns which can be extracted from the resulting matrix. 
Notice that appending just one column to $X$ results in creating $p!/(s-1)!(p-s+1)!$ additional submatrices. %For instance, 
%the number of new matrices with $20$ columns occuring after appending one column to a matrix $X$ consisting of $2000$ columns is $3955743245874294701514795405479360688839208000$. 
Therefore, having a flexible bound on the perturbation of the extreme eigenvalues may be a valuable tool in practice. 
Another situation where perturbation has to be precisely controlled is  
when one wants to study the random variable $\|X^t_TX_T-I\|$ using the tools 
of modern concentration of measure theory \cite{BoucheronLugosiMassart:OUP13}. Indeed, after a 'Poissonization' 
trick has been employed as in Claim $(3.29)$ p.2173 in \cite{CandesPlan:AnnStat09}, one may study the problem 
on a product space for which the celebrated theorem of Talagrand or recent variants by Boucheron, Lugosi and Massart can be 
used. However, for such concentration theorems to be relevant, one also needs precise perturbation bounds on the 
extreme singular values.   

Let us consider the case where one uses a fixed design matrix $X$ and $T$ is obtained by selecting $s$ columns 
uniformly at random. Then, Lemma 3.6 
in \cite{CandesPlan:AnnStat09} implies that 
\bean 
\bP \left(\|X_T^tX_j\|_2^2 \ge s/p \|X\|^2+t\right) & \le & 2\exp\left(\frac{t^2}{2\mu^2(X)(s\|X\|^2/p+t/3)} \right)
\eean  
and thus, using (\ref{ups}), one easily obtains that 
\bea
\label{cp36}
\|X_T^tX_j\|_2^2 \le \frac1{4\log(p)}
\eea
with probability at least $1-2e^{-\frac{3}{64\mu^2(X)\log(p)}}$ if $C\le 1/8$. Assuming that the coherence is 
of the order of $1/\log(p)$, one obtains that (\ref{cp36}) holds with high probability. Thus, using inequality 
(\ref{ineg1max}), one obtains a perturbation of the order of $\log(p)^{-1/2}$ of the maximum eigenvalue of $X_T^tX_T$. On the other hand, if 
one is interested in the perturbation with norm already larger than $\sqrt{1+\rho}$, 
(\ref{ineg2max}) gives a perturbation of the norm of the order $\rho^{-1} \log(p)^{-1}$ which is significantly smaller 
and, as one might check in the assumptions of Theorem 5 in \cite{BoucheronLugosiMassart:AnnProba03}, is the 
right order of magnitude for obtaining the desired concentration of measure for this problem. 

\subsection{Perturbation of the algebraic connectivity of a graph by removing an edge}

Another application of spectral perturbation is in hypergraph theory. 
\subsubsection{The Laplacian of a graph}
The $G=(V,E)$ denote an oriented graph with vertex set $V$ and 
edge set $E$. In such a graph, each edge $e$ has a positive end and a negative end. We say that two 
vertices are adjacent if they are ends of the same edge. 
The indicence matrix $\mathcal I_G$ associated to $G$ is the matrix whose rows are indexed by the vertices and 
the columns are indexed by the oriented edges. The $(i,j)$-entry of $\mathcal I_G$ is 
\bean
\mathcal I_G(i,j) & = & 
\begin{cases}
+1 \textrm{ if vertex $i$ is the positive end of edge $j$ } \\
-1 \textrm{ if vertex $i$ is the negative end of edge $j$ } \\
0 \textrm{ otherwise.}
\end{cases}
\eean 
The adjacency matrix $\mathcal A_G$ is the matrix whose rows and columns are indexed by the vertices. The 
$(i,i^\prime)$-entry of $\mathcal A_G$ is
\bean
\mathcal A_G(i,i^\prime) & = & 
\begin{cases}
+1 \textrm{ if vertex $i$ and vertex $i^\prime$ are adjacent } \\
0 \textrm{ otherwise.}
\end{cases}
\eean 
The degree vector of $G$ is the vector $d_G$ where $d_G(i)$ is the number of edges of $G$ to which vertex $i$ is an end. 
The Laplacian matrix of $G$ is the matrix $\mathcal L_G$ defined by 
\bean 
\mathcal L_G & = & D(d_G)-\mathcal A_G,
\eean 
and the following well known identity holds 
\bea
\label{factolap}
\mathcal L_G & = & \mathcal I_G \mathcal I_G^t.  
\eea
If $G$ is not oriented, the degree vector and the adjacency matrix are defined in exactly the 
same way and any arbitrary orientation of the edges of $G$ will of course provide the same result. 
Notice that $\mathcal L_G$ is positive semi-definite and that $0$ is always an eigenvalue of $\mathcal L_G$. 
If the second smallest eigenvalue is nonzero, then the graph $G$ is connected. This second smallest eigenvalue 
is very important for the study of various graphs and is called the algebraic connectivity of $G$ or
Fiedler's value of $G$. We will denote the algebraic connectivity by $a(G)$. 
The eigenvalues of the Laplacian of a graph have been the subject of intense 
research for many years and is connected to various fields of pure and applied mathematics like expander families
\cite{HooryLinialWidgerson:BAMS06}, geometry of Banach spaces \cite{AlonMilman:JCT85}, Markov chains \cite{Bremaud:MarkovChains99}, clustering \cite{vonLuxburg:StatComp07}, to name just a few. 

\subsubsection{Edge deletion and the algebraic connectivity}
We now turn to the problem of controling the impact of deleting an edge on the algebraic connectivity of 
$\mathcal L$. The complement of a graph is the graph obtained by putting an edge between every non-adjacent 
couple of vertices and by deleting all edges already present in the graph before this operation. It is well
known \cite{Merris:LAA98} that 
\bea
\label{lowerfromupper}
a(G) & \ge & n-\lb_1(G^c). 
\eea 
Thus, controlling the effect of adding an edge to the complement of a graph allows to control the effect of 
deleting an edge of the graph on the algebraic connectivity. 

For $e=(u,v)$, with $u,v\in V(G)$, let $G^c+e$ denote the graph obtained from $G^c$ by appending the edge $e$. 
Let $i_e$ denote the column vector obtained by setting the component indexed by $u$ to -1 and 
the component indexed by $v$ to +1, and by setting all other components to zero.  
Since the Laplacian matrix $\mathcal L_{G^c}$ admits a factorization analogous to (\ref{factolap}), we obtain that  
$\mathcal L_{G^c}$ can be written in the form (\ref{A}) with $c=2$ and $a=\mathcal I_{G^c}^t i_e$. 

In many fields, it is very important to study the robustness of the graph topology to structural perturbations. For 
instance, the study of food webs has been of growing interest in the recent years \cite{Rossberg:Wiley2013}. As is well known, 
predation habits evolve with time as a consequence of landscape changes and competition. The 
world wide web is also an interesting application of graph theory and the formation and perturbation 
of communities is a topic of growing interest \cite{Newman:OUP10}. Communication 
systems are also often viewed as an interesting application of graph theory.  
In these examples, as in many other from ecology, social sciences, wireless communications, genetics, etc,
one is often interested in predicting the impact on topology of removing or adding an edge, a vertex or of various other
modifications of the structure, as measured by a relevant index such as the algebraic connectivity. 

\subsubsection{Controllability of complex networks}
In \cite{PorfiriDiBernardo:Automatica08}, the following model was proposed. One considers a set of 
$N$ $n$-dimensional oscillators governed by a system of nonlinear differential equations. 
Moreover, we assume that each oscillator is coupled with a restricted set of other oscillators. 
This coupling relationship can be efficiently described using a graph where the vertices 
are indexed by the oscillators and there is an edge between two oscillators if they are coupled. 
The overall dynamical system is given by the following set of differential equations
\bea \label{syst}
x_i^\prime(t) & = & f(x_i(t)) -\sigma B \sum_{j=1}^N l_{ij} x_j(t)+u_i(t), \ t\ge t_0,
\eea
$i=1,\ldots,N$, where $x_i(t) \in \R^n$ is the state of the $i^{th}$ oscillator, 
$\sigma$ is a positive real number, $B\in \R^{n\times n}$, 
$f: \R\mapsto \R$ describes the dynamics of each oscillator, $L=(l_{ij})_{i,j=1,\ldots,N}$ is the graph 
Laplacian of the underlying graph, and $u_i(t)$, $i=1,\ldots,N$ are the controls. 
For the system to be well defined, we have to specify some initial conditions $x_i(t_0)=x_{i0}$ for $i=1,\ldots,N$. 

Assume that we have a reference trajectory $s(t)$, $t\ge t_0$ satisfying the differential equation 
\bean 
s^\prime(t) & = & f(s(t)). 
\eean 
We want to control the system using a limited number of nodes. The selected nodes are called the "pinned nodes". 
For this purpose, we use a linear feedback law of the form 
\bean 
u_i(t) & = & p_i Ke_i(t), 
\eean 
where $e_i(t)=s(t)-x_i(t)$, $K$ is a feedback gain matrix, and where
\bean 
p_i & = & 
\begin{cases}
1 \: \textrm{ if node $i$ is pinned} \\
0 \: \textrm{ otherwise}.
\end{cases}
\eean 
Let $P$ denote the diagonal matrix with diagonal vector $p_1,\ldots,p_N$. 

The authors then give the definition of (global pinning-) controllability (based on Lyapunov stability criteria):

\begin{defi}
We say that the system (\ref{syst}) is controllable if the error dynamical system $e:=(e_i(t))_{1\le i\le N}$ is Lyapunov stable around the origin, i.e. there exists a positive definite function $V$ such that $\frac{d}{dt}V(e(t))< 0$ when $e(0)\neq 0$.
\end{defi}

The following result, \cite[Corollary 5]{PorfiriDiBernardo:Automatica08}, provide a sufficient condition for a system to be controllable:
\begin{prop}[\cite{PorfiriDiBernardo:Automatica08}]
\label{cor5}
Assume that $f$ is such that there exists a bounded matrix $F_{\xi,\wdt{\xi}}$, 
whose coefficients depend on $\xi$ and $\tilde{\xi}$, which satisfies 
\bea
\label{Fxi} 
F_{\xi,\wdt{\xi}}\left(\xi-\wdt{\xi}\right) & = & f(\xi)-f(\wdt{\xi}), \quad \xi,\wdt{\xi}\in\R^n.
\eea  
Let $Q\in \R^{n\times n}$ be a positive definite matrix such that 
\bean 
QK+K^tQ^t & = & \kappa \left(QB+B^tQ^t \right) \\
\left(QB+B^tQ^t \right) & \succeq 0 
\eean 
and 
\bea
\label{tart}
\frac12 \ \lb_{N} \left(\sigma L+\kappa P  \right)  \ \lb_{n}\left(QB+B^tQ^t \right) & > & \sup_{\xi,\wdt{\xi}}
\|F_{\xi,\wdt{\xi}}\|\ \|Q\|.
\eea 
Then the system is controllable.
\end{prop}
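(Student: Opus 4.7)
The plan is to apply Lyapunov's direct method with the quadratic candidate
\[
V(e) \;=\; \sum_{i=1}^N e_i^t Q\, e_i,
\]
where $e(t) = (e_1(t),\ldots,e_N(t))^t$ denotes the stacked error state. The first step is to derive the closed-loop error dynamics by subtracting (\ref{syst}) from $s' = f(s)$ and substituting the feedback $u_i = p_i K\, e_i$. Using (\ref{Fxi}) to write $f(s)-f(x_i) = F_{s,x_i}\, e_i$, and exploiting the identity $\sum_j l_{ij}=0$ (which allows replacing $x_j$ by $-e_j$ inside the coupling sum since $L$ is a graph Laplacian), one arrives at
\[
e_i' \;=\; F_{s,x_i}\, e_i \;-\; \sigma B \sum_{j=1}^N l_{ij}\, e_j \;-\; p_i K\, e_i.
\]

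The second step is to differentiate $V$ along trajectories. Since $Q$ is symmetric positive definite, $\frac{d}{dt} V(e) = 2 \sum_i e_i^t Q\, e_i'$, and the substitution splits the derivative into three natural pieces. The drift piece satisfies $2 e_i^t Q F_{s,x_i} e_i = e_i^t (QF_{s,x_i} + F_{s,x_i}^t Q^t) e_i \le 2\|Q\|\, \sup_{\xi,\wdt\xi}\|F_{\xi,\wdt\xi}\|\,\|e_i\|_2^2$, so this contribution is bounded by $2\|Q\|\sup\|F\|\,\|e\|_2^2$. The coupling piece is $-2\sigma\, e^t(L \otimes QB) e$, which after symmetrization (using $L = L^t$) equals $-\sigma\, e^t (L \otimes (QB + B^t Q^t)) e$. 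For the pinning piece $-2\,e^t (P \otimes QK) e$, the key algebraic device is the assumption $QK+K^tQ^t = \kappa(QB+B^tQ^t)$, by which the same symmetrization yields $-\kappa\, e^t(P \otimes (QB+B^tQ^t))e$. Adding the coupling and pinning contributions consolidates them into the single Kronecker form $-\,e^t\bigl((\sigma L + \kappa P)\otimes(QB+B^tQ^t)\bigr)e$.

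The third step is the spectral estimate. Both $\sigma L + \kappa P$ (sum of the PSD Laplacian and a diagonal of nonnegative entries) and $QB+B^tQ^t$ (by hypothesis) are positive semi-definite, and the spectrum of a Kronecker product is the set of pairwise products of the factor spectra; hence the smallest eigenvalue of the Kronecker matrix above is $\lb_N(\sigma L + \kappa P)\,\lb_n(QB+B^tQ^t)$, and the quadratic form is bounded below by this quantity times $\|e\|_2^2$. Combining everything yields
\[
\frac{d}{dt}V(e) \;\le\; \Bigl(2\,\sup\|F\|\,\|Q\| \;-\; \lb_N(\sigma L+\kappa P)\,\lb_n(QB+B^tQ^t)\Bigr)\|e\|_2^2,
\]
which is strictly negative for $e \neq 0$ by hypothesis (\ref{tart}), proving Lyapunov stability of the origin for the error dynamics.

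The main obstacle I expect is structural rather than computational: the identity $QK+K^tQ^t = \kappa(QB+B^tQ^t)$ must be invoked at precisely the moment where the pinning and coupling terms would otherwise remain separate, so that they fuse into one quadratic form carried by $(\sigma L + \kappa P)\otimes(QB+B^tQ^t)$. Once this fusion is in place, the closing spectral bound is a one-liner about Kronecker products of PSD matrices.
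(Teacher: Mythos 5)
The paper does not actually prove Proposition~\ref{cor5}: it is imported verbatim as Corollary~5 of \cite{PorfiriDiBernardo:Automatica08}, and the authors only use it as a black box inside the proof of Proposition~\ref{propkappa}. So there is no in-paper argument to compare yours against; what you have written is, in substance, the standard Lyapunov proof from the cited reference, and it is correct. Your three steps are sound: the row sums of the Laplacian vanish, so the coupling term does convert from $x_j$ to $-e_j$; the symmetrization $2e^t(L\otimes QB)e = e^t\bigl(L\otimes(QB+B^tQ^t)\bigr)e$ and the fusion of the pinning term via $QK+K^tQ^t=\kappa(QB+B^tQ^t)$ are exactly the right algebraic moves; and the Kronecker eigenvalue identity gives the spectral lower bound, after which (\ref{tart}) closes the estimate against the definition of controllability given in the paper (your $V$ is positive definite since $Q$ is). Two small points worth making explicit: (i) you invoke symmetry of $Q$ to write $\frac{d}{dt}V = 2\sum_i e_i^t Q e_i'$, whereas the proposition only says ``positive definite''; if $Q$ is not symmetric the computation goes through with $Q+Q^t$ in place of $2Q$ and the bound $\|Q+Q^t\|\le 2\|Q\|$, so nothing is lost, but you should say so. (ii) Your claim that the smallest eigenvalue of the Kronecker product is the product of the smallest eigenvalues needs both factors to be positive semi-definite; for $\sigma L+\kappa P$ this requires $\kappa\ge 0$, which is implicit in the setting (and is in any case forced by (\ref{tart}) together with $QB+B^tQ^t\succeq 0$, which make $\lb_N(\sigma L+\kappa P)>0$). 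Neither point is a gap, but both deserve a sentence.
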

Many systems of interest satisfy the constraint specified by (\ref{Fxi}); see \cite{JiangTangChen:CSF03}.
This proposition is very useful for node selection via the matrix $P$. Indeed, assume that $Q$ is selected, 
then one may try to maximise $\lb_{N} \left(\sigma L+\kappa P  \right)$ as a function of $P$, 
under the constraint that no more than $r$ nodes can be pinned. This is a combinatorial problem that 
can be relaxed using semi-definite programming or various heuristics \cite{GhoshBoyd:IEEECDC06}. 

Using Theorem \ref{main}, we are in position for stating an easy controllability condition in the 
spirit of \cite[Corollary 7]{PorfiriDiBernardo:Automatica08}, based on the algebraic connectivity of the graph, 
the number of pinned nodes, the coupling strengh and the feedback gain. 
\begin{prop} \label{propkappa}
Let $Q\in \R^{n\times n}$ be a positive definite symetric matrix that satisfies  
\bean 
QK+K^tQ^t & = & \kappa \left(QB+B^tQ^t \right) \\
\left(QB+B^tQ^t \right) & \succeq 0,
\eean
and assume that
\bea
\label{Fcond}
\|F_{\xi,\tilde{\xi}}\| & < & \frac{\sigma \lb_{\min>0}(L) \ \lb_{\min}\left(QB+B^tQ^t\right) }{2\ \|Q\|}.
\eea 
If $\kappa$ satisfies 
\bean 
\kappa & \ge & \frac{\sum_{i=1}^r {\rm deg}_i}{\sigma \lb_{\min>0}(L) - \frac{2\ \|F_{\xi,\tilde{\xi}}\| \ \|Q\|}{\lb_{\min}\left(QB+B^tQ^t\right)}}
+\sigma \lb_{\min>0}(L),
\eean 
then the system is controllable. 
\end{prop}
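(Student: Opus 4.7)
The plan is to combine Proposition \ref{cor5} with the Mathias-type lower bound (\ref{ineg2min}) derived from Theorem \ref{smallestnonzero}.

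Step 1 is a reduction. The hypothesis (\ref{Fcond}) rewrites as $\sigma\lambda_{\min>0}(L)>\tau$ where
\[
\tau := \frac{2\|F_{\xi,\tilde\xi}\|\,\|Q\|}{\lambda_{\min}(QB+B^tQ^t)},
\]
and the assumption (\ref{tart}) of Proposition \ref{cor5} is precisely $\lambda_{\min}(\sigma L+\kappa P)>\tau$. Hence it suffices to show that the stated condition on $\kappa$ implies this spectral inequality.

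Step 2, which is the heart of the proof, consists in applying (\ref{ineg2min}) to a bordered matrix whose smallest nonzero eigenvalue coincides with $\lambda_{\min}(\sigma L + \kappa P)$. Exploiting the factorization $\sigma L+\kappa P = YY^t$ with $Y=[\sqrt\sigma\,\mathcal I_G \mid \sqrt\kappa\, E]$, where $E$ is the $N\times r$ indicator matrix of pinned vertices, I pass to the Gram matrix $Y^tY$, which shares the nonzero spectrum of $\sigma L + \kappa P$ and carries the appended-block structure of (\ref{A}): the two diagonal blocks are $\kappa I_r$ and $\sigma\mathcal I_G^t\mathcal I_G$, the latter of rank $N-1$ with smallest positive eigenvalue $\sigma\lambda_{\min>0}(L)$, while the off-diagonal incidence block contributes the quantity $\sum_{i=1}^r\deg_i$ through the Frobenius identity $\|\mathcal I_G^t E\|_F^2 = \sum_{i=1}^r\deg_i$. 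Applied with the identification $c=\kappa$, $\lambda_r = \sigma\lambda_{\min>0}(L)$ and $\|a\|^2 = \sum_{i=1}^r \deg_i$, the inequality (\ref{ineg2min}) produces
\[
\lambda_{\min}(\sigma L+\kappa P) \;\ge\; \sigma\lambda_{\min>0}(L) - \frac{\sum_{i=1}^r \deg_i}{\kappa - \sigma\lambda_{\min>0}(L)},
\]
valid because the stated lower bound on $\kappa$ in particular enforces $\kappa > \sigma\lambda_{\min>0}(L)$.

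Step 3 is an algebraic rearrangement: asking that the right-hand side above exceed $\tau$ gives
$(\kappa-\sigma\lambda_{\min>0}(L))(\sigma\lambda_{\min>0}(L)-\tau) \ge \sum_{i=1}^r \deg_i$,
which is exactly the hypothesis on $\kappa$, and Proposition \ref{cor5} then concludes. The delicate point, and the main obstacle, is in Step 2: Theorem \ref{smallestnonzero} is phrased for a single appended column, while our Gram matrix has an $r$-dimensional bordering block, so one must either invoke a block version of the Mathias bound (in the spirit of Theorem 2 of \cite{Li-Li}) or iterate (\ref{ineg2min}) one pinned vertex at a time while carefully rescaling, so that the coefficient of $\|a\|^2$ in the final bound is precisely $\sum_{i=1}^r \deg_i$ rather than picking up an unwanted factor of $\sigma\kappa$.
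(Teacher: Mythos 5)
Your proposal follows essentially the same route as the paper: reduce to the spectral condition of Proposition \ref{cor5}, factor $\sigma L+\kappa P$ as a Gram matrix built from $\sqrt{\sigma}\,\mathcal I$ and the pinned indicator vectors, pass to the transposed Gram matrix which has the appended-column structure (\ref{A}), and invoke the Mathias-type bound of Theorem \ref{smallestnonzero}; the final algebraic rearrangement in your Step 3 is identical. The one structural difference is that the paper does not use a block version of the bound: it appends the vectors $\sqrt{\kappa}\,e_1,\dots,\sqrt{\kappa}\,e_r$ one at a time, applies Theorem \ref{smallestnonzero} $r$ times, and uses $\lambda_{\min>0}(\sigma L+\kappa\sum_{i\le k}e_ie_i^t)\le\lambda_{\min>0}(\sigma L)$ at each step to keep a uniform denominator $\kappa-\sigma\lambda_{\min>0}(L)$ --- precisely the second workaround you suggest. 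The ``delicate point'' you flag at the end is real, and it is worth noting that the paper's own proof does not resolve it: with $x=\sqrt{\kappa}\,e_1$ and $X=\sqrt{\sigma}\,\mathcal I$ one has $a=X^tx$ and hence $\|a\|_2^2=\sigma\kappa\,{\rm deg}_1$, whereas the paper's displayed intermediate bound uses ${\rm deg}_1$ in place of $\sigma\kappa\,{\rm deg}_1$ with no rescaling argument; carrying the correct factor through would make the sufficient condition on $\kappa$ quadratic rather than of the stated form. So your proposal reproduces the paper's argument, and the gap you honestly identify is a gap in the published proof rather than a defect specific to your approach.
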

\begin{proof}
We follow the same steps as for the proof of Corollary 7 in \cite{PorfiriDiBernardo:Automatica08}. 
We assume without loss of generality that the first $r$ nodes are the pinned nodes. 
We may write $P$ as 
\bean 
P & = & \sum_{i=1}^r e_i e_i^t,  
\eean 
where $e_i$ is the $i^{th}$ member of the canonical basis of $\R^N$, i.e. $e_i(j)=\delta_{i,j}$. We will 
try to compare $\lb_{N} \left(\sigma L+\kappa P \right)$ with $\lb_{N} \left(\sigma L\right)$ and 
use Proposition \ref{cor5} to obtain a sufficient condition for controllability based on $L$, i.e. 
the topology of the network. For this purpose, let us notice recall that $L$ can be written as 
\bean  
L & = & \mathcal I\cdot \mathcal I^t,
\eean 
where $\mathcal I$ is the incidence matrix of any directed graph obtained from the system's graph by 
assigning an arbitrary sign to the edges \cite{BrouwerHaemers:Springer12}. 
Of course $L$ will not depend on the chosen assignment. Using this factorization of $L$, 
we obtain that 
\bean 
\sigma L + \kappa \sum_{i=1}^r  e_ie_i^t & = & 
\left[\sqrt{\kappa}  \ e_r,\ldots,\sqrt{\kappa} \ e_{1},\sqrt{\sigma} \mathcal I\right] \ 
\left[\sqrt{\kappa} \ e_r,\ldots,\sqrt{\kappa}  \ e_{1},\sqrt{\sigma} \mathcal I\right]^t.
\eean  
Moreover, $\lb_{\min>0}\left(\sigma L + \kappa P\right)$ can be expressed easily as the 
smallest nonzero eigenvalue of the $r^{th}$ term of a sequence of matrices with shape (\ref{A}) 
for with we can use Theorem \ref{smallestnonzero} iteratively. Indeed, we have 
\bean 
\lb_{\min>0}\left( \sigma L + \kappa  e_1 \right)
& = & 
\lb_{\min>0}\left( \left[\sqrt{\kappa} \ e_{1},\sqrt{\sigma} \mathcal I\right]^t
\left[\sqrt{\kappa} \ e_{1},\sqrt{\sigma} \mathcal I\right]\right).
\eean 
Let us denote by $x$ the vector $\sqrt{\kappa} \ e_1$ and by $X$ the matrix 
$[\sqrt{\sigma} \mathcal I]$. Then, we have that 
\bean 
\left[\sqrt{\kappa} \ e_{1},\sqrt{\sigma} \mathcal I\right]^t
\left[\sqrt{\kappa} \ e_{1},\sqrt{\sigma} \mathcal I\right] 
& = & 
\left[
\begin{array}{cc}
x^t x & x^t X \\
X^tx  & X^t X
\end{array}
\right].
\eean 
Therefore, Theorem \ref{smallestnonzero} gives 
\bean 
\lb_{\min>0} \left(\sigma L+\kappa e_1e_1^t \right) & \ge & \sigma \lb_{\min>0}(L) 
-\frac{{\rm deg}_1}{(\kappa-\sigma \lb_{\min>0}(L))},
\eean 
where ${\rm deg}_1$ is the degree of node number 1. 

Let us now consider 
$\lb_{\min>0} \left(\sigma L + \kappa \ e_1 +\delta_2 e_2 \right)$. We have that 
\bean 
\lb_{\min>0} \left(\sigma L + \kappa \ e_1 +\delta_2 e_2 \right)
& = & \lb_{\min>0} \left(
\left[\sqrt{\kappa} \ e_2,\sqrt{\kappa} \ e_{1},\sqrt{\sigma} \mathcal I\right]^t
\left[\sqrt{\kappa} \ e_2,\sqrt{\kappa} \ e_{1},\sqrt{\sigma} \mathcal I\right]\right).
\eean 
Let us denote by $x$ the vector $\sqrt{\kappa} \ e_2$ and by $X$ the matrix 
$[\sqrt{\kappa} \ e_1,\sqrt{\sigma} \mathcal I]$. Then, we have that 
\bean 
\left[\sqrt{\kappa} \ e_{2},\sqrt{\kappa} \ e_{1},\sqrt{\sigma} \mathcal I\right]^t
\left[\sqrt{\kappa} \ e_{2},\sqrt{\kappa} \ e_{1},\sqrt{\sigma} \mathcal I\right] 
& = & 
\left[
\begin{array}{cc}
x^t x & x^t X \\
X^tx  & X^t X
\end{array}
\right]
\eean
and using Theorem \ref{smallestnonzero} again, we obtain  
\bean 
\lb_{\min>0} \left(\sigma L+\kappa e_1e_1^t+\kappa e_2e_2^t \right) & \ge & \lb_{\min>0}(\sigma L+\kappa e_1e_1^t) 
-\frac{{\rm deg}_2}{(\kappa-\lb_{\min>0}(\sigma L+\kappa e_1e_1^t))}. 
\eean  
Since $\lb_{\min>0}(\sigma L+\kappa e_1e_1^t) \le \lb_{\min>0}(\sigma L)$, we thus obtain 
\bean 
\lb_{\min>0} \left(\sigma L+\kappa e_1e_1^t+\kappa e_2e_2^t \right) & \ge & \lb_{\min>0}(\sigma L+\kappa e_1e_1^t) 
-\frac{{\rm deg}_2}{(\kappa-\sigma \lb_{\min>0}(L))}. 
\eean  

We can repeat the same argument $r$ times and obtain 
\bea
\label{bnd>0} 
\lb_{\min>0} \left(\sigma L+\kappa P \right) & \ge & \sigma \lb_{\min>0}(L) -
\frac{\sum_{i=1}^r {\rm deg}_i}{\kappa-\sigma \lb_{\min>0}(L)}. 
\eea 

Finally, by Proposition \ref{cor5}, we know that the following constraint is sufficient for 
preserving controllability 
\bea
\label{condkappa}
\lb_{\min>0}\left(\sigma L+\kappa \sum_{i=1}^r e_ie_i^t\right) & \ge  & \frac{2\ \|F_{\xi,\tilde{\xi}}\| \ \|Q\|}{\lb_{\min}\left(QB+B^tQ^t\right)}.
\eea
By (\ref{bnd>0}), it is sufficient to garantee the controllability of our system to impose 
\bean 
\sigma \lb_{\min>0}(L) -
\frac{\sum_{i=1}^r {\rm deg}_i}{\kappa-\sigma \lb_{\min>0}(L)} & \ge & \frac{2\ \|F_{\xi,\tilde{\xi}}\| \ \|Q\|}{\lb_{\min}\left(QB+B^tQ^t\right)}.
\eean 
Then, combining (\ref{condkappa})  with (\ref{Fcond}) implies that 
\bean 
\kappa & \ge & \frac{\sum_{i=1}^r {\rm deg}_i}{\sigma \lb_{\min>0}(L) - \frac{2\ \|F_{\xi,\tilde{\xi}}\| \ \|Q\|}{\lb_{\min}\left(QB+B^tQ^t\right)}}
+\sigma \lb_{\min>0}(L)
\eean 
is a sufficient condition for controllability.
\end{proof}

{\bf Acknowledgement --} We thank Maurizio Porfiri for pointing out a missing assumption in 
Proposition \ref{propkappa}.

%\subsection{Other applications}
%\subsubsection{Random walks with perturbations}

%\subsubsection{The Hub matrix in communication theory}

\bibliographystyle{amsplain}
\bibliography{database}

\end{document}